\documentclass[12pt,reqno]{amsart}
\usepackage{amsmath}
\usepackage{amssymb}  % defines \nmid, for example
\usepackage{latexsym} % for \Box
\usepackage{comment}
\usepackage{url}
\usepackage{fullpage,url,amssymb,amsmath,amsthm,amsfonts,mathrsfs}
\usepackage[usenames,dvipsnames]{color}
\usepackage[pagebackref = true, colorlinks = true, linkcolor = blue, citecolor = Green]{hyperref}
\usepackage[lite]{amsrefs}
\usepackage{enumitem}
\usepackage{amscd}   % for commutative diagrams
\usepackage[all, cmtip]{xy} % for complicated commutative diagrams
\usepackage{xfrac}
\usepackage[T1]{fontenc}

\DeclareFontEncoding{OT2}{}{} % to enable usage of cyrillic fonts

\usepackage[all]{xy}
\usepackage{fullpage}

\usepackage{color} 
\newcommand{\todo}[1]{{\color{blue} \sf TO DO}}

\def\act#1#2%
  {\mathop{}%
   \mathopen{\vphantom{#2}}^{#1}%
   \kern-3\scriptspace%
   #2}

\newcommand{\Q}{{\mathbb Q}}

\newcommand{\C}{{\mathbb C}}
\newcommand{\F}{{\mathbb F}}

\DeclareMathOperator{\Jac}{Jac}

\newtheorem{Theorem}{Theorem}[section]
\newtheorem{Lemma}[Theorem]{Lemma}

\newtheorem{Corollary}[Theorem]{Corollary}

\newtheorem{Remark}[Theorem]{Remark}
\newtheorem{Conjecture}[Theorem]{Conjecture}

\theoremstyle{definition}

\numberwithin{equation}{section}

\begin{document}
\title{Recovering curves from isogeny classes of Prym varieties}

\author{Jos\'e Felipe Voloch}
\address{School of Mathematics and Statistics, University of Canterbury, Private Bag 4800, Christchurch 8140, New Zealand}
\email{felipe.voloch@canterbury.ac.nz}
\urladdr{http://www.math.canterbury.ac.nz/\~{}f.voloch}

\subjclass[2010]{Primary: 14,H30, 14H40 ; Secondary: 14K02 }
\keywords{Algebraic Curves, Jacobians, Prym varieties}
\begin{abstract}
We propose a conjecture on how to recover an algebraic curve from the
isogeny classes of its Prym varieties. Motivated by this conjecture, we analyze constructions of pairs of curves with isogenous Jacobians and several isogenous Pryms.
\end{abstract}

\dedicatory{To the memory of Yuri Zarhin}

\maketitle

\section{Introduction}
\label{sec:intro}

For our purposes, a curve is a smooth, irreducible, projective algebraic curve defined over some field and an abelian variety is a smooth, irreducible, projective algebraic group variety defined over some field. An isogeny between abelian varieties of the same dimension is an algebraic group homomorphism of finite degree. Beyond the introduction, we will be interested in varieties defined over the complex numbers or families of such.

To a curve, one can associate canonically an abelian variety called its Jacobian. The Jacobian comes with some additional structure, namely a principal polarization, and Torelli's theorem states that one can recover the curve from its Jacobian together with its principal polarization. However, if we ignore the principal polarization, the Jacobian alone is not enough to recover the curve, i.e., there are non-isomorphic curves with isogenous Jacobians (viz. the classical Richelot isogeny) and even isomorphic Jacobians (see e.g. \cite{Howe}).

In addition to a given curve, we can consider its unramified covers and their associated Jacobians. These decompose as a product of the Jacobian of the original curve and the so-called Prym varieties. In \cite[Conjecture 2.2]{SV}, a conjecture was raised on how to recover a curve of genus at least two defined over a finite field from the Jacobians of its Hilbert class field covers up to isogeny. In the case of finite fields, the isogeny class of an abelian variety can be characterized by its zeta function by Tate's isogeny theorem and the conjecture was stated in terms of zeta functions. Additionally, the zeta function can be computed efficiently, so the conjecture was extensively tested. A variant of this conjecture, using double covers, was asked as a question on the same paper (\cite[Question 2.3]{SV}). A third variant of the conjecture, still in the context of finite fields, was then proved in \cite{BV}.

The main purpose of this paper is to formulate a conjecture that encompasses the conjectures mentioned above and discuss a few variants. Motivated by this conjecture, we also analyze constructions of non-isomorphic pairs of curves with isogenous Jacobians and several isogenous Pryms so as to provide a lower bound on the number of necessary Pryms needed to obtain the isomorphism between the curves.

Yuri Zarhin was particularly interested in and made substantial contributions to all aspects of the theory of abelian varieties. Closer to the topic of this paper, he wrote a number of papers (e.g. \cites{Z1,Z2}) with broad conditions for Jacobians not to be isogenous.

\section{Main conjecture}

We denote by $\pi_1(C)$, as usual, the fundamental group of a complex curve and recall that a surjection $\pi_1(C) \to G$ to a finite group $G$ defines an \'etale cover $C_1 \to C$ of curves with Galois group $G$. The \'etale cover induces a norm map of abelian varieties $\Jac(C_1) \to \Jac(C)$ (where $\Jac(C_1), \Jac(C)$ are the Jacobians of $C_1,C$ respectively) and the connected component of the identity of the kernel of this norm map is the Prym variety associated to the cover.

\begin{Conjecture}
\label{conj:main}
Let $C$ and $C'$ be complex curves of genus at least two and $\varphi:\pi_1(C)\to \pi_1(C')$ an isomorphism.
Assume that, for every finite quotient $\pi_1(C') \to G$, the corresponding covers $C_1$ and $C'_1$ are such that there is an isogeny $\Jac(C_1) \to \Jac(C'_1)$ commuting with the action of $G$. Then $\varphi$ comes from an isomorphism $C \to C'$ up to an inner automorphism of $\pi_1(C)$, accounting for base points.
\end{Conjecture}

Equivalently, the map $f \mapsto f_*$ from $\{f:C\to C'\}$ to $\{\psi:\pi_1(C)\to \pi_1(C')\}/\operatorname{Inn}(\pi_1(C'))$ should induce a bijection to the image of the subset of the $\psi's$ that satisfy the condition of the conjecture.

A different conjecture in a similar spirit has been proposed by Prasad and Rajan \cite{01902899}.

One could consider a weakening of the conjecture in which the matching of the Pryms does not come from a map $\varphi$ as above. One can also consider strengthenings of the conjecture by considering instead isomorphisms $\varphi$ between the abelianization of the fundamental groups or the exponent-two quotients of this abelianization. The latter is false for genus two (see Section \ref{sec:extra}) but still plausible for higher genus. Extensive calculations for small genus and small finite fields (see \cites{SV, doubly}) justify this. Any version of the conjecture that uses only finitely many groups $G$ for a fixed genus $g$ would follow in the general case from the statement over finite fields by specialization. The results of \cite{BV} are not strong enough, alas.

We now sketch how the hypotheses of Conjecture \ref{conj:main} imply the hypotheses of \cite[Theorem 2.5]{BV} and thus the conclusion of Conjecture \ref{conj:main} for curves over finite fields (up to Frobenius twists). 

For an abelian variety $A/\mathbb{F}_q$, let $F$ be its Frobenius isogeny. Then $ \ker(1-F^n)=A(\mathbb{F}_{q^n})$. So, for $A=\Jac(C)$, the isogeny $1-F^n$
produces an \'etale abelian cover of $C$ with Galois group $G=\Jac(C)(\F_{q^n})$. An isogeny of the Jacobians of such covers implies equality of the corresponding zeta functions. Now, the zeta function of the cover is the product of $L$-functions and, if this isogeny commutes with the action of $G$, then the equality of zeta functions implies the equality of the $L$-functions, as required to apply \cite[Theorem 2.5]{BV}.

\section{Families}

The paper \cite{MNP} proves that the Jacobian of the generic point of a dimension $3g-3-k$ family of curves of genus $g$ is not isogenous to a different Jacobian, if $g > 3k+4$. Hence we can distinguish curves using their Jacobians up to isogeny, without needing coverings, for sufficiently general curves. In a similar direction, \cite{LM} proves a comparable result for Pryms of double covers. On the other hand, the theorem below shows that there are two distinct $g+1$-dimensional families of pairs of curves of genus $g$ with isogenous Jacobians. We will extend this construction to include Pryms as well.

\begin{Theorem}
\label{thm:mestre}
(Mestre, \cite{Mestre}) The hyperelliptic curves $C, C'$ below, where $v, a_1, \ldots, a_g$ are variables, $b_i=(a_i v^2-1)/(a_i-v^2), 1 \leq i \leq g$, have isogenous Jacobians.

\begin{align*}
C: y^2=& \,(x-v)(v x-1)\left(x^2-a_1\right) \ldots\left(x^2-a_g\right)\\
C': y^2=& \,(x-v)\left(v x-(-1)^g\right)\left(x^2-b_1\right) \ldots\left(x^2-b_g\right)
\end{align*}
%C needs to be twisted by $A=2\left(v^2+1\right) \prod_{i=1}^g\left(v^2-a_i\right)$ to work over any field
\end{Theorem}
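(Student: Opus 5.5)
The plan is to realize both $C$ and $C'$ as quotients of a single curve $X$ carrying a group $H$ of automorphisms, and then to apply the Kani--Rosen decomposition theorem to the idempotent relations in $\mathbb{Q}[H]$. The structural input is the substitution $s=x^2$. The branch points $\pm\sqrt{a_i}$ of $C$ lie over the points $s=a_i$ of the $s$-line. The map
\[
\mu(s)=\frac{v^2 s-1}{s-v^2}
\]
is an involution of the $s$-line that sends $a_i$ to $b_i$. It also has a distinguished interaction with $v$: it fixes the points $s$ satisfying $s^2-2v^2s+1=0$, and it sends $v^2$ to $\infty$. So I first look for a Möbius transformation $\tilde\mu$ of the $x$-line lifting $\mu$, possibly after a degree-two base change. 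The extra quadratic factors $(x-v)(vx-1)$ and $(x-v)(vx-(-1)^g)$ should be exactly what records the branching of this base change, and the sign $(-1)^g$ should come out of matching leading coefficients.

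Concretely, I will work with the tower $P^1_x\to P^1_s$ and its pullback under a second double cover on which $\mu$ becomes a coordinate change. The first step is to form the normalization $X$ of the compositum of the function fields $\mathbb{C}(C)$ and $\mathbb{C}(C')$. To do this I must identify $\mathbb{C}(x)$ and $\mathbb{C}(x')$ inside a common field, via a relation such as $x'^2=\mu(x^2)$ adjoined with a suitable square root involving $v$. The second step is to check that $X$ is Galois over a rational curve with group $H\cong(\mathbb{Z}/2)^n$, for $n=3$ or $4$. The third step is to list the intermediate quotients by the index-two subgroups of $H$ (or the relevant ones), and to compute their genera by Riemann--Hurwitz from the explicit branch loci. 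I expect the quotients to be $C$, $C'$, the genus-$0$ curves, and two further curves $D$ and $D'$ defined over the $s$-line, of the shape $w^2=\prod_i(s-a_i)\cdot q(s)$ and its $\mu$-transform.

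Once the genera are known, the Kani--Rosen relation will give isogenies of the form
\[
\Jac(C)\times(\text{other factors})\sim \Jac(X)^{\text{part}}\sim \Jac(C')\times(\text{other factors}).
\]
The key point is that $\mu$ is a genuine isomorphism $D\cong D'$ of the curves over the $s$-line, since it carries the branch set $\{a_i\}$ together with the auxiliary points to $\{b_i\}$ together with the auxiliary points. With this, the extra factors cancel by Poincaré reducibility, which leaves $\Jac(C)\sim\Jac(C')$. Alternatively, I can argue that both $\Jac(C)$ and $\Jac(C')$ are isogenous to $\Jac(D)\times\Jac(E)$ for the two quotients of $C$ by the involutions lifting $x\mapsto -x$ and $x\mapsto 1/x$.

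The main obstacle is the first step: pinning down the exact common cover, meaning the correct square root and the correct lift of $\mu$, so that the branch loci match, including the factor $(vx-(-1)^g)$ and the parity of $g$. I will first try writing $C$ as a double cover of the genus-$0$ quotient by an involution built from $x\mapsto -x$ composed with the hyperelliptic involution, and then verify directly that $\mu$ transports that quotient's defining equation to the corresponding one for $C'$. This verification is a routine factorization check. If it fails, the fallback is to verify the isogeny numerically by specializing to finite fields and comparing zeta functions, which will guide the correct construction.
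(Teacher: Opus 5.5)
The paper gives no proof of this statement; it only cites Mestre. Judged on its own, your plan has a genuine gap: the structure you posit in the second step does not exist.

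\textbf{The posited structure is absent.} Neither $x\mapsto -x$ nor $x\mapsto 1/x$ lifts to $C$. The first sends the branch point $v$ to $-v$, and the second sends $\sqrt{a_i}$ to $1/\sqrt{a_i}$, neither of which is a branch point for generic parameters. This rules out your ``alternative'' route through quotients of $C$, and also the involution ``built from $x\mapsto -x$'' that you plan to quotient by.

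More seriously, the common cover is not an elementary abelian cover of a rational curve. Write $f(x)=(x-v)(vx-1)P(x^2)$ with $P(s)=\prod_i(s-a_i)$. Then $f(x)f(-x)=P(s)^2(s-v^2)(v^2s-1)$. So the Galois closure $X$ of the degree-four map $C\to\mathbb{P}^1_s$ is $\mathbb{C}(x,y,y')$ with $y^2=f(x)$, $y'^2=f(-x)$, and its group over $\mathbb{C}(s)$ is $D_4$. Its only quadratic subfields over $\mathbb{C}(s)$ are generated by $\sqrt{s}$, by $\omega:=yy'/P(s)$ (which satisfies $\omega^2=(v^2s-1)(s-v^2)$), and by $\sqrt{s}\,\omega$. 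None of these is branched at the $a_i$, so your curves $D,D'$ never appear.

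Your relation $x'^2=\mu(x^2)$ is exactly right: $x'=\omega/(s-v^2)$. But then $C=X/\langle\tau\rangle$ with $\tau\colon y'\mapsto -y'$, while $C'\cong X/\langle\gamma\rangle$ with $\gamma\colon x\mapsto -x,\ y\leftrightarrow y'$. The involutions $\tau$ and $\gamma$ do not commute; they generate $D_4$. So no abelian group of automorphisms of $X$ contains both. For abelian $H$, Kani--Rosen only splits $\Jac(X)$ into character pieces, and $\Jac(X/K_1)$, $\Jac(X/K_2)$ become products over different sets of characters. Nothing in that framework identifies them.

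\textbf{The missing idea: use the non-abelian group.} Let $\sigma^2\colon y\mapsto -y,\ y'\mapsto -y'$. The two Klein four-groups $\langle\sigma^2,\tau\rangle=\mathrm{Gal}(X/\mathbb{C}(x))$ and $\langle\sigma^2,\gamma\rangle=\mathrm{Gal}(X/\mathbb{C}(x'))$ both have rational quotient. The standard $(\mathbb{Z}/2)^2$ decomposition applied to each gives
\[
\Jac(C)^2\times F\;\sim\;\Jac(X)\;\sim\;\Jac(C')^2\times F,
\]
where $F=X/\langle\sigma^2\rangle$ is the genus-one curve $\omega^2=(x^2-v^2)(v^2x^2-1)$. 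Equivalently, this is the $D_4$ Brauer relation. Poincar\'e reducibility then gives $\Jac(C)\sim\Jac(C')$. Note that $\mu$ is not an isomorphism $D\cong D'$ between two quotients. It is just the coordinate change on the base of $\mathbb{C}(x')/\mathbb{C}(s)$, sending $v^2,1/v^2$ to $\infty,0$ and $a_i$ to $b_i$. The extra factor on each side is literally the same curve $F$, together with a second copy of $\Jac(C)$, respectively $\Jac(C')$.

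\textbf{Identifying $X/\langle\gamma\rangle$ with $C'$.} One computes
\[
(y+y')^2=2P(s)\,t,\qquad t=v(s+1)+\omega .
\]
Here $t$ is a degree-one function on the conic, with its zero at $(s,\omega)=(0,-v)$ and its pole at one of the two points over $s=\infty$. Count the points of odd order of $P(s)\,t$. In the coordinate $x'$ the branch locus is $\{\pm\sqrt{b_i}\}\cup\{1/v,\,(-1)^g v\}$. The substitution $x'\mapsto(-1)^g x'$ carries this to the branch locus of $C'$. So $(-1)^g$ comes from the parity of the pole order at the two points at infinity, not from matching leading coefficients.

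Finally, your fallback of checking zeta functions over finite fields cannot replace a proof for a family with free parameters.
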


\begin{Remark}
In the original statement in \cite{Mestre}, $C$ is twisted by a factor $A$ so that the statement works with $\Q$ as the field of constants. As we are primarily interested in $\C$ as the field of constants, we omit the twist.
\end{Remark} 

Recall that the \'etale double covers of a hyperelliptic curve of genus $g$ given by $y^2=f(x), \deg f = 2g+2$, are obtained by
factoring $f(x)=k(x)h(x)$ with $k,h$ coprime and $\deg k, \deg h$ both even and taking the cover $z^2 = k(x)$. The corresponding Prym variety is isogenous to the product of the Jacobians of $y^2=k(x)$ and $y^2=h(x)$. We say that the curve $y^2=h(x)$ is complementary to $y^2=k(x)$.

\begin{Lemma}
\label{lem:gpt}
Notation as in Theorem \ref{thm:mestre}.
Let $T(a)=(v^2a-1)/(a-v^2),R(a)=1/a$. These are commuting involutions.
Assume $S \subset \{a_1,\ldots,a_g\}, \#S = 2e >0$. If either 
$T(S)=S$ or $R(T(S))=S$, then the double covers of $C,C'$ respectively associated with 
$$k_S(x)=\prod_{a\in S}(x^2-a), k'_S(x)=\prod_{a\in S}(x^2-T(a))$$
 have isogenous Pryms.
\end{Lemma}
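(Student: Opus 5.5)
The plan is to use the description of Pryms of étale double covers of hyperelliptic curves recalled just before the lemma. Both Pryms split up to isogeny into two Jacobians, and the idea is to match the factors on the $C$ side with those on the $C'$ side one at a time. Write
$$h_S(x)=(x-v)(vx-1)\prod_{a\notin S}(x^2-a),\qquad h'_S(x)=(x-v)\bigl(vx-(-1)^g\bigr)\prod_{a\notin S}\bigl(x^2-T(a)\bigr),$$
so that $f=k_Sh_S$ and $f'=k'_Sh'_S$, with $k_S,h_S$ coprime of even degree (generically), and likewise on the primed side. Then the Prym of the $C$ cover is isogenous to $\Jac(y^2=k_S)\times\Jac(y^2=h_S)$, and the Prym of the $C'$ cover is isogenous to $\Jac(y^2=k'_S)\times\Jac(y^2=h'_S)$. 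It suffices to show that the $k$-factors are isogenous and that the $h$-factors are isogenous.

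\textbf{The $k$-factors.} I will show these curves are in fact isomorphic.
\begin{itemize}
\item If $T(S)=S$, then $k'_S=k_S$ as polynomials, so there is nothing to prove.
\item If $R(T(S))=S$, then, since $R,T$ are commuting involutions, $T(S)=R(S)=\{1/a: a\in S\}$. Hence $k'_S(x)=\prod_{a\in S}(x^2-1/a)$. The substitution $x\mapsto 1/x$, $y\mapsto y/x^{2e}$ gives
$$x^{4e}\prod_{a\in S}(x^{-2}-a)=\prod_{a\in S}(1-ax^2)=c\prod_{a\in S}(x^2-1/a)$$
with $c=\prod_{a\in S}(-a)$. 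After rescaling $y$ by $\sqrt c$ over $\C$, this identifies $y^2=k_S$ with $y^2=k'_S$.
\end{itemize}

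\textbf{The $h$-factors.} Here I will invoke Theorem~\ref{thm:mestre} with $g$ replaced by $g-2e$ and with the variables $a_i$, $a_i\notin S$. Since $2e$ is even, $(-1)^{g-2e}=(-1)^g$, so the sign in the factor $vx-(-1)^g$ is exactly what Mestre's construction prescribes. Moreover $b_i=T(a_i)$ by definition. Thus $y^2=h_S$ and $y^2=h'_S$ are precisely Mestre's pair $C,C'$ in genus $g-2e$, and their Jacobians are isogenous. The degenerate case $g=2e$ is harmless: then both $h$-curves have genus $0$, and the $h$-factors are trivial.

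\textbf{Main obstacle.} The subtle point is that the $a_i$ are no longer independent variables: the hypothesis $T(S)=S$ or $R(T(S))=S$ imposes relations among the $a\in S$. I need to check that Theorem~\ref{thm:mestre} still applies to the complementary factor and that the isogeny survives the specialization. For the complementary factor, the variables $a\notin S$ (together with $v$) remain free, so Mestre applies generically in those. If further specialization is needed, I will argue as follows. The two Jacobians form abelian schemes over the open locus where the polynomials are squarefree and the factorizations are coprime. An isogeny on the generic fibre extends, by the Néron mapping property, to a homomorphism of abelian schemes whose kernel is finite flat. Hence it restricts to an isogeny on every fibre. The same reasoning makes the Prym decomposition valid on every fibre where the curves are smooth.
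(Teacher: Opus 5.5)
Your proof is correct and follows the paper's argument: the complementary curves form Mestre's pair in genus $g-2e$ (your parity check $(-1)^{g-2e}=(-1)^g$ is what makes the sign match), and the $k$-factors coincide when $T(S)=S$ and are isomorphic via $x\mapsto 1/x$ with a rescaling of $y$ when $R(T(S))=S$. You also address why Mestre's isogeny survives the specialization of the $a_i$, which the paper leaves implicit; your spreading-out argument handles this adequately, since the isogeny and a quasi-inverse both extend over the normal parameter space and so remain an isogeny on every fibre.
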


\begin{proof}
One immediately checks that $R,T$ are commuting involutions.
For a given set $S$ and polynomials $k_S,k'_S$, the complementary curves form a pair as in Theorem \ref{thm:mestre} of genus
$g -2e$, so their Jacobians are isogenous by that theorem. Moreover,
if $T(S)=S$, then $k'_S=k_S$ so the curves $y^2=k_S,y^2=k'_S$ are equal and so, in particular have isogenous Jacobians. If $R(T(S))=S$, then $T(S)=R(S)$
and the curves $y^2=k_S,y^2=k'_S$ are isomorphic using $x \mapsto 1/x$ and rescaling $y$ and again have isogenous Jacobians.
\end{proof}

To apply the lemma, assume $g>4$ and let $r=[g/4], 4 \nmid g, r= g/4-1, 4|g$. Let $a_{4j+1},j=0,\ldots,r-1$ and $a_i, i> 4r$ be
independent variables and define 
$$a_{4j+2}=T(a_{4j+1}),a_{4j+3}=R(a_{4j+1}), a_{4j+4}=R(T(a_{4j+1})), j=0,\ldots,r-1.$$ 
Now, we can take for $S$ any non-empty subset consisting 
of a union of pairs $\{a_{2j+1},a_{2j+2}\}, j \le 2r-1$ and it will satify $T(S)=S$ or we can take for $S$ any non-empty subset consisting of a union of pairs $\{a_{4j+1},a_{4j+4}\}$ or $\{a_{4j+2},a_{4j+3}\}$, $j \le r$ and
it will satify $RT(S)=S$. The sets consisting of a union of quadruples $\{a_{4j+1},a_{4j+2},a_{4j+3},a_{4j+4}\}$ occur on both counts. So we get at least $2\cdot 4^r - 2^r -1$ suitable 
sets $S$, leading to this many isogenous Pryms (out of a total of $4^g-1$ Pryms) in the generic case of a family of dimension $r+g-4r=g-3r$. To summarize:

\begin{Corollary}
For $g>4$, let $r=[g/4], 4 \nmid g, r= g/4-1, 4|g$.
There is a specialization of the family of pairs of curves of Theorem \ref{thm:mestre} to a
family of dimension $g-3r = g/4 +O(1)$ which, in addition to having isogenous Jacobians, have $2\cdot 4^r - 2^r -1$ isogenous Pryms.
\end{Corollary}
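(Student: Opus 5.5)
The plan is to apply Lemma \ref{lem:gpt} to the specialization described just before the statement, and then count the admissible sets $S$. The family is parametrized by $v$, the $r$ free variables $a_{4j+1}$ ($0\le j\le r-1$) and the $g-4r$ free variables $a_i$ with $i>4r$, so its dimension is $1+r+g-4r$. The statement's count of $g-3r$ leaves out the parameter $v$; I would record this (it only strengthens the claim) and note that $g-3r=g/4+O(1)$.

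The first step is to check that the specialization is nondegenerate, so that $C,C'$ remain smooth curves of genus $g$. This needs the $2g+2$ roots of each defining polynomial to be distinct for generic values of the free parameters. For $C$ the roots are $\pm\sqrt{a_i}$, $v$ and $1/v$. The values $a_{4j+1}, T(a_{4j+1}), R(a_{4j+1}), RT(a_{4j+1})$ are generically distinct, because $T$, $R$ and $RT$ are nonidentity Möbius involutions, and for independent $a$'s no coincidences occur. For $C'$ the $b_i=T(a_i)$ permute each quadruple, since $T$ commutes with $R$, so the same argument applies. Theorem \ref{thm:mestre} then gives isogenous Jacobians along the whole family.

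The second step is to verify the hypotheses of Lemma \ref{lem:gpt}. Since $T$ is an involution, $T(\{a_{2j+1},a_{2j+2}\})=\{a_{2j+1},a_{2j+2}\}$ for $j\le 2r-1$. For example, $T(a_{4j+3})=TR(a_{4j+1})=RT(a_{4j+1})=a_{4j+4}$, using that $R$ and $T$ commute. Hence any union of such pairs satisfies $T(S)=S$. Likewise $RT(a_{4j+1})=a_{4j+4}$ and $RT(a_{4j+2})=R(a_{4j+1})=a_{4j+3}$, so unions of the pairs $\{a_{4j+1},a_{4j+4}\}$ and $\{a_{4j+2},a_{4j+3}\}$ satisfy $RT(S)=S$. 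Every such $S$ has even cardinality, so Lemma \ref{lem:gpt} gives a pair of isogenous Pryms for each one.

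The third step, which is the only real bookkeeping, is the count. The $T$-type sets are the nonempty unions of $2r$ pairs, giving $2^{2r}-1=4^r-1$ sets. The $RT$-type sets likewise give $4^r-1$. A set is counted twice exactly when it is a union of full quadruples; there are $2^r$ such unions including the empty set, so $2^r-1$ nonempty ones. The number of distinct sets is therefore $(4^r-1)+(4^r-1)-(2^r-1)=2\cdot4^r-2^r-1$.

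The main obstacle is making sure these are genuinely distinct Pryms. Distinct sets $S$ give distinct factorizations $f=k_Sh_S$, and hence distinct double covers of $C$, as long as the roots stay distinct; that was settled in the first step. I would also check that each $S$ is a proper subset so that $h_S$ is nonconstant. This holds because $\#S\le 4r<g$, and the factors $(x-v)(vx-1)$ always lie in $h_S$. Genericity of the remaining parameters handles everything else.
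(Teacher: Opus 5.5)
Your proposal is correct and follows the paper's argument. It uses the same specialization $a_{4j+2}=T(a_{4j+1})$, $a_{4j+3}=R(a_{4j+1})$, $a_{4j+4}=RT(a_{4j+1})$, applies Lemma \ref{lem:gpt} to the same $T$-stable and $RT$-stable unions of pairs, and gets $2\cdot 4^r-2^r-1$ by the same inclusion--exclusion over unions of full quadruples. Your added genericity and distinctness checks are sound, as is your observation that the free parameter $v$ makes the family $(g-3r+1)$-dimensional (the paper's count $r+g-4r$ omits it), which only strengthens the statement.
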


 It is possible to specialize further and eke out a few more isogenous Pryms on a family of lower dimension.

\medskip

Another way of constructing families of pairs of curves with isogenous Jacobians is due to Smith \cite{Smith}, which can also be upgraded to include isogeny between some of the Pryms. Smith's construction is as follows:

\begin{Theorem}
\label{thm:smith}
(Smith, \cite{Smith}) If the hyperelliptic curves $C_1, C_2$ are defined by $y_i^2=f_i(x_i), i=1,2$ and $f_1(x_1)-f_2(x_2)$ factors as $A(x_1,x_2)B(x_1,x_2)$ then there is a
homomorphism between the Jacobians of $C_1, C_2$ induced by the correspondence
$X \subset C_1 \times C_2$ defined by $A(x_1,x_2)=y_1-y_2=0$.
\end{Theorem}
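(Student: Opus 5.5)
The plan is to view the correspondence $X$ as a curve with its two projections $p_1: X \to C_1$, $p_2: X \to C_2$, and take the homomorphism to be $(p_2)_* \circ p_1^* : \Jac(C_1) \to \Jac(C_2)$. The only content is that $X$ really is a correspondence: a curve in $C_1\times C_2$ each of whose components maps finitely onto both factors. Once that is established, the general construction of homomorphisms between Jacobians from correspondences, via pullback of divisors followed by pushforward, gives the statement.

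First I will check that $X$ lies in $C_1 \times C_2$. This means that a point $(x_1,y_1,x_2,y_2)$ with $A(x_1,x_2)=0$ and $y_1=y_2$ must satisfy both curve equations. If $y_1^2=f_1(x_1)$, then
$$y_2^2=y_1^2=f_1(x_1)=f_2(x_2)+A(x_1,x_2)B(x_1,x_2)=f_2(x_2).$$
So $X$ is cut out in $C_1\times C_2$ by the single extra condition $A=0$, together with $y_1=y_2$. In the affine $(x_1,x_2)$-plane it lies over the plane curve $A=0$, and its preimage in the fibred product splits into the two pieces $y_1=\pm y_2$; we keep the piece $y_1=y_2$. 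I would then take projective closures in $C_1\times C_2$ and, if needed, normalize, so that $X$ is a possibly reducible curve with finite maps to each factor.

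Next, finiteness of the projections. Over a general $x_1$, the equation $A(x_1,x_2)=0$ has finitely many solutions $x_2$, provided $A$ has no factor depending only on $x_1$ or only on $x_2$. After fixing $x_2$, the value $y_2=y_1$ is determined. So $p_1$ is finite of degree $\deg_{x_2}A$, and symmetrically $p_2$ has degree $\deg_{x_1}A$. This is the step I expect to be the main obstacle. A degenerate factor of $A$, say one depending on $x_1$ alone, would give vertical components, and I would handle these by discarding them. Discarding is harmless because such a component contributes a constant map, which is zero on $\Jac$. One also has to check behaviour at infinity, which should be harmless since we work with closures.

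Finally, I define $\phi(D)=(p_2)_*p_1^*D$ on divisors. Pullback and pushforward preserve degree zero up to scaling and take principal divisors to principal divisors: $p^*\mathrm{div}(g)=\mathrm{div}(g\circ p)$ and $p_*\mathrm{div}(h)=\mathrm{div}(\mathrm{Nm}\,h)$. So $\phi$ descends to a homomorphism $\Jac(C_1)\to\Jac(C_2)$, and by rigidity it is a morphism of abelian varieties. Nontriviality is not claimed by the statement, so nothing more is needed.
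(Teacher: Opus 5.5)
Your proposal is correct, and it is the standard argument behind Smith's construction (the paper gives no proof, only the citation): since $y_1^2=y_2^2$ on the preimage of $\{A=0\}$, that preimage in $C_1\times C_2$ splits into the pieces $y_1=\pm y_2$, and the piece $X$ induces $(p_2)_*\circ p_1^*$. Two small corrections: the degenerate case you call the main obstacle cannot occur, because a nonconstant factor of $A$ in $\mathbb{C}[x_1]$ with a root $c$ would force $f_2(x_2)\equiv f_1(c)$ (and symmetrically for $\mathbb{C}[x_2]$), so every component of $X$ dominates both curves; and $\phi$ is algebraic because $p_1^*$ and the norm $(p_2)_*$ are morphisms on $\mathrm{Pic}^0$ (or by the universal property of $\Jac(C_1)$), not by rigidity, which only turns a morphism sending $0$ to $0$ into a homomorphism.
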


The families thus obtained are of smaller dimension than those of Theorem \ref{thm:mestre} for the same genus but they
can be arranged to have some isogenous Pryms without imposing additional equations, provided it can be proved that the homomorphisms are isogenies, as follows.

Consider the hyperelliptic curves $C_1', C_2'$ defined by
$y_i^2=F(f_i(x_i)), i=1,2$, where $F$ is some polynomial, then we have

$$F(f_1(x_1))-F(f_2(x_2)) = A(x_1,x_2)B(x_1,x_2)\left(\frac{F(f_1(x_1))-F(f_2(x_2))}{f_1(x_1)-f_2(x_2)}\right).$$

So there is a homomorphism between the Jacobians of $C_1', C_2'$ and, for each factorization $F=HK$, there is a homomorphism between the Pryms obtained
from the factorizations $F(f_i) = H(f_i)K(f_i)$ (using the same identities with $F$ replaced by $K,H$, assuming $\deg f_i$ is even or $\deg H, \deg K$ both even). One still needs to impose some conditions to ensure these homomorphisms are isogenies.

As this method, even when successful, leads to fewer isogenous Pryms than the previous method, we do not pursue it further.

Yet another way of constructing isogenous Jacobians with at least one isogenous Prym is via Sunada's method using Gassmann triples (see e.g. \cite{06713697}). The isogenous Prym comes for free as the curves of the relevant pair are both quotients of the same curve.

\section{The extraordinary curves} 
\label{sec:extra}

For $r\ne \pm 27, r\neq 23 \pm 10\sqrt{-2}$, let $C_{r}$ denote the curve given by
\begin{equation}
\label{eqn:basiccurve}
C_r: y^2 = x^6 + (r-18)x^4 + (81-2r)x^2 + r.
\end{equation}

The following result is proved in \cite{doubly} and provides an example of a pair curves of genus $2$ with isogenous Jacobians and isogenous Pryms for {\em all} double covers (and a few triple covers as well).
This is an extraordinary coincidence, hence the name ``extraordinary curves.'' In \cite{doubly} we place the existence of this pair in the context of the Zilber--Pink conjecture and prove that this conjecture implies that there can be only finitely many such pairs in the family (\ref{eqn:basiccurve}). We expect that, in fact, it is the only such pair and the calculations reported in \cite{doubly} strongly suggest that.

\begin{Theorem}
\label{thm:extraordinary} 
Let $r_1$ and $r_2$ be the roots of $x^2 - 27x + 1$. Let $C_{r_1}$ and $C_{r_2}$, as in equation (\ref{eqn:basiccurve}).
Then $C_{r_1}$ and $C_{r_2}$ have isogenous Jacobians and isogenous matching Pryms for {\em all} double covers for a suitable map $\varphi:\pi_1(C_{r_1})^{ab}/2\to \pi_1(C_{r_2})^{ab}/2$ (the quotients of the abelianized fundamental groups by squares).

% Then the Weierstrass points of $C_{r_1}$ and $C_{r_2}$ are rational over~$L$, and $C_{r_1}$ and $C_{r_2}$ are doubly isogenous over $L$ when we embed them into their Jacobians using a Weierstrass point as the base point. Furthermore, the Pryms of the intrinsic triple covers of $C_{r_1}$ and $C_{r_2}$ are also isogenous over~$L$.
\end{Theorem}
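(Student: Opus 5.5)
The plan is to exploit the extra automorphisms of $C_r$ so that every Prym splits into elliptic curves or small pieces that can be matched by hand. The polynomial $f_r(x)=x^6+(r-18)x^4+(81-2r)x^2+r$ is even, so $f_r(x)=g_r(x^2)$ with $g_r$ a cubic, and $C_r$ carries the involution $x\mapsto -x$. Hence $\Jac(C_r)$ is isogenous to $E_r\times E_r'$, where
\[
E_r: y^2=g_r(u),\qquad E_r': y^2=u\,g_r(u).
\]

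\textbf{Step 1: the Jacobians.} Compute the $j$-invariants of $E_{r_i}$ and $E'_{r_i}$ as rational functions of $r$. Then show that, for $r_1r_2=1$ and $r_1+r_2=27$, the curves $E_{r_1},E_{r_2}$ (and likewise $E'_{r_1},E'_{r_2}$), or possibly crossed pairs, are related by a cyclic isogeny of small degree. I would test this with the modular polynomials $\Phi_N(j_1,j_2)$ for small $N$. Since $\operatorname{Gal}(\Q(\sqrt{725})/\Q)$ swaps $r_1$ and $r_2$, each check reduces to a symmetric function of $r_1,r_2$, which can be evaluated using $x^2-27x+1$.

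\textbf{Step 2: the Pryms.} The fifteen nontrivial double covers correspond to the even-degree factorizations $f_r=k\,h$ into coprime factors over the Weierstrass points $\pm\alpha_1,\pm\alpha_2,\pm\alpha_3$, where the $\alpha_i^2$ are the roots of $g_r$. With $\deg k=2$ and $\deg h=4$, the Prym is isogenous to $\Jac(y^2=h)$, an elliptic curve; the conic $y^2=k$ contributes nothing. These fall into two types:
\begin{itemize}
\item $k=x^2-\alpha_i^2$ (3 cases);
\item $k=(x-\epsilon\alpha_i)(x-\epsilon'\alpha_j)$ with $i\ne j$ (12 cases).
\end{itemize}
Computing the $j$-invariant of each quartic $h$ gives fifteen elliptic curves over the splitting field of $g_{r_1}$, and the same for $r_2$. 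The map $\varphi$ on $\pi_1^{ab}/2\cong H_1(C,\F_2)$ is determined by a bijection between the $2$-torsion partitions of the Weierstrass points preserving the Weil pairing. I would aim to construct it from an explicit correspondence between root sets, for instance via $\alpha\mapsto$ a Möbius image (similar to $T$ in Lemma \ref{lem:gpt}), which transports the root structure of $f_{r_1}$ to that of $f_{r_2}$. Equivariance under $x\mapsto -x$ should cut the fifteen checks to a few orbits.

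\textbf{Step 3: verification.} For each orbit, check that the matched elliptic curves are isogenous by showing that their $j$-invariants satisfy some $\Phi_N=0$, or by exhibiting explicit Vélu isogenies of small degree.

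\textbf{Main obstacle.} The hard part is finding the correct matching $\varphi$ together with the isogeny degrees. A bijection of partitions that preserves the Weil pairing is forced, and it is not obvious. I would first search numerically: compute all thirty $j$-invariants to high precision, find candidate $N$ with $\Phi_N(j,j')\approx 0$, read off the bijection from these matches, verify that it is symplectic, and then certify each relation exactly in the number field $\Q(r_1,\alpha_i)$. The fact that this works at all is the coincidence itself, so I expect no conceptual proof. The argument will be a finite computation, as in \cite{doubly}.
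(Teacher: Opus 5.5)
\textbf{Verdict: genuine gap.} Your proposal is a plan for a computation, and the computation it defers is the whole theorem. The paper gives no argument of its own here; it defers to \cite{doubly}. Your setup is correct: $\Jac(C_r)\sim E_r\times E_r'$, the fifteen duads of Weierstrass points, and each Prym isogenous to the elliptic curve $y^2=h$.

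But the content of the statement is a finite list of isogeny relations together with a compatible $\varphi$, and you establish none of them. You exhibit no isogeny $E_{r_1}\to E_{r_2}$ (or any crossed pair), identify no degree, and construct no $\varphi$. ``Search numerically for $N$ with $\Phi_N(j,j')\approx 0$, then certify'' assumes the search succeeds; nothing in your argument bounds the degrees.

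Two smaller points:
\begin{enumerate}
\item A Möbius map of the $x$-line carrying the roots of $f_{r_1}$ to those of $f_{r_2}$ cannot exist. It would give $C_{r_1}\cong C_{r_2}$, which is false because the pairs $\{j(E_r),j(E'_r)\}$ differ.
\item Nothing in the statement forces $\varphi$ to be symplectic. Once the Pryms match orbit by orbit, as below, it can be chosen so.
\end{enumerate}

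\textbf{Missing structure.} You also miss the structure that makes the check small and rigid. Note that $f_r=(x^3-9x)^2+r(x^2-1)^2$. The map $\phi(x)=(x^3-9x)/(x^2-1)$ has $\phi'=(x^2+3)^2/(x^2-1)^2$. So $\phi$ is a cyclic triple cover of $\mathbb{P}^1$, totally ramified at $\pm\sqrt{-3}$. As a cubic in $x$ over $\Q(\phi)$ its discriminant is $4(\phi^2+27)^2$, so the deck transformation $\sigma$ is defined over $\Q$. The Weierstrass points are $\phi^{-1}(\pm\sqrt{-r})$. Hence $\sigma$ and $x\mapsto -x$ generate an $S_3$ acting simply transitively on them, and $\operatorname{Aut}(C_r)\supseteq D_6$.

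This has the following consequences.
\begin{itemize}
\item $E'_r$ is $3$-isogenous to $E_r$, with $j(E_r)=-(r-3)^3(r-27)/r$. This is an $X_0(3)$-parametrization with Hauptmodul $-r$. So $\Jac(C_r)\sim E_r^2$.
\item The fifteen Pryms fall into $S_3$-orbits of sizes $6,3,3,3$, and Pryms in one orbit are isomorphic. The size-$6$ orbit consists of the duads inside one $\sigma$-orbit. The three size-$3$ orbits are the three perfect matchings between the two $\sigma$-orbits.
\item Any permutation of the three matchings is realized by a bijection of Weierstrass points preserving the two $\sigma$-orbits.
\end{itemize}

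So the $\varphi$ half of your ``main obstacle'' disappears. It suffices to prove three things:
\begin{enumerate}
\item $E_{r_1}\sim E_{r_2}$;
\item the size-$6$ Pryms match (their $j$-invariant is $-16(r-27)^2/r$, i.e.\ $-16r_2^3$ for $C_{r_1}$ and $-16r_1^3$ for $C_{r_2}$);
\item the three size-$3$ Pryms match up to permutation.
\end{enumerate}
Then $\varphi$ is induced by a bijection of Weierstrass points. Using only $x\mapsto -x$, as you propose, you would still face nine orbits and an unstructured search for $\varphi$.
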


\begin{Remark}
In addition to the matching double cover Pryms, there are two triple covers of $C_{r_1}$ and $C_{r_2}$ (dubbed intrinsic triple covers in \cite{doubly}) that also have isogenous Pryms.
\end{Remark}

\section*{Acknowledgements and statement of AI use}
This work was supported by the Marsden Fund administered by the Royal Society of New Zealand. I would like to thank H. Esnault and C. Voisin for helpful discussions.

A substantial portion of this work was carried out before AI became useful for mathematical research. I spoke about Conjecture \ref{conj:main} in the complex case at a conference at Brown University in 2023. Recently, I used ChatGPT to convert my handwritten notes from that talk to LaTeX. The use of Mestre's Theorem \ref{thm:mestre} to obtain isogenous Pryms was already suggested in \cite{SV} and carried out further in an early draft of this work. I asked ChatGPT to review that draft and it suggested Lemma \ref{lem:gpt} which greatly improved on my earlier construction. I take responsibility for all the proofs and the presentation.

%%%%%%%%%%%%%%%%%%%%%%%%%%%%%%%%%%%%%%%%%%%%%%%%%%%%%%%%%%%%%%%%%%%%%%%%%%%%%%%%
%%%%%%%%%%%%%%%%%%%%%%%%%%%%%%%%%%%%%%%%%%%%%%%%%%%%%%%%%%%%%%%%%%%%%%%%%%%%%%%%
%%%%%%%%%%%%%%%%%%%%%%%%%%%%%%%%%%%%%%%%%%%%%%%%%%%%%%%%%%%%%%%%%%%%%%%%%%%%%%%%
%%%%%%%%%%%%%%%%%%%%%%%%%%%%%%%%%%%%%%%%%%%%%%%%%%%%%%%%%%%%%%%%%%%%%%%%%%%%%%%%
%%%%%%%%%%%%%%%%%%%%%%%%%%%%%%%%%%%%%%%%%%%%%%%%%%%%%%%%%%%%%%%%%%%%%%%%%%%%%%%%
	
\section*{References}
%\begin{bibdiv}
\bibliographystyle{plain}

%\end{bibdiv}
	
\end{document}